\newtheorem{corollary}{Corollary}
\newtheorem{lemma}{Lemma}
\newtheorem{proposition}{Proposition}
\newtheorem{remark}{Remark}
\newtheorem{theorem}{Theorem}
\newtheorem{example}{Example}
\numberwithin{equation}{section}
\newcommand{\be}{\begin{equation}}
	\newcommand{\ee}{\end{equation}}
\newcommand{\ben}{\begin{enumerate}}
	\newcommand{\een}{\end{enumerate}}
\newcommand{\beq}{\begin{eqnarray}}
	\newcommand{\eeq}{\end{eqnarray}}
\newcommand{\beqn}{\begin{eqnarray*}}
	\newcommand{\eeqn}{\end{eqnarray*}}
\begin{document}
	\title{On cylindrical symmetric Finsler metrics with vanishing Douglas
curvature}

\author{Newton Sol\'orzano}
\address[N. Sol\'orzano]{ILACVN - CICN, Universidade Federal da Integra\c c\~ao Latino-Americana, UNILA}
\curraddr{Itaipu Parquetec, Foz do Igua\c cu-PR, 85867-970 - Brasil}	\email{nmayer159@gmail.com}

\author{Dik D. Lujerio Garcia}
\address[D. Lujerio]{Departamento Académico de Matemática de la Facultad de Ciencias, Universidad Nacional Santiago Ant\'unez de Mayolo, UNASAM}
\curraddr{Jr. Augusto B.Leguia N° 110, Huaraz - Per\'u}	\email{dlujeriog@unasam.edu.pe}	
 
 \author{V\'ictor Le\'on}
\address[V. Le\'on]{ILACVN - CICN, Universidade Federal da Integra\c c\~ao Latino-Americana, UNILA}
\curraddr{Parque tecnol\'ogico de Itaipu, Foz do Igua\c cu-PR, 85867-970 - Brasil}
\email{victor.leon@unila.edu.br}

\author{Alexis Rodr\'iguez Carranza}
	
\address[A. Rodr\'iguez Carranza]{Departamento de Ciencias, Universidad Privada del Norte, UPN}
\curraddr{Sede San Isidro, Av. El Ejercito 920, Trujillo - Per\'u}	\email{alexis.rodriguez@upn.edu.pe}	
	\begin{abstract}
 In this paper, we consider the {\em cylindrically symmetric Finsler metrics} and we obtain their Douglas curvature. Furthermore, we obtain the differential equation system of the cylindrically symmetric Finsler metrics with vanishing Douglas curvature. Many examples are included.
	\end{abstract}
	
	\keywords{Finsler metric, cylindrically symmetric, warped product, Douglas metric.}   
	\subjclass[2020]{53B40, 53C60}
	\date{\today}

	\maketitle
\section{Introduction}

The Douglas curvature, introduced by J. Douglas \cite{D} in 1927, is an important projective invariant in Finsler geometry. That is, if two Finsler metrics $F$ and $\bar{F}$ are projectively equivalent, then $F$ and $\bar{F}$ have the same Douglas curvature. A Finsler metric is called \textit{Douglas metric} if their Douglas curvature vanishes. Douglas metrics are rich, in the sense that every Riemman metrics and projectively flat metrics are also Douglas metrics. Besides there are a lot of examples of Douglas metrics that are not Riemanniana nor projectively flat. For instante, a Randers metric $F=\alpha + \beta$ is a Douglas metric if and only if $\beta$ is closed (\cite{Bacso1997}).

On the other hand, there exist important Finsler metrics in the literature  which satisfy 
 \begin{align}\label{eq:1}F\left((x^0,O\overline{x}),(y^0,O\overline{y})\right)=F\left((x^0,\overline{x}),(y^0,\overline{y})\right), \text{ for every } O\in O(n),
 \end{align}
  where 
 $ x=(x^0,\overline{x})=(x^0,x^1,\ldots,x^n)\in M=I\times \mathbb{R}^n, y=(y^0,\overline{y})=(y^0,y^1,\ldots,y^n)\in T_xM,$ like the Shen’s fish tank metric on $\Omega= \mathbb{B}^2\times \mathbb{R}\subset  \mathbb{R}^3 $: 
\begin{align*}
F=\frac{\sqrt{(-x^2y^1+x^1y^2)^2+((y^1)^2+(y^2)^2+(y^3)^2)(1-(x^1)^2-(x^2)^2)}}{1-(x^1)^2-(x^2)^2} - \frac{x^2y^1-x^1y^2}{1-(x^1)^2-(x^2)^2}, 
\end{align*}
where $ x=(x^1,x^2,x^3)\in \mathbb{B}^2\times \mathbb{R} $ and $ y=(y^1,y^2,y^3)\in T_xO, $
or, the spherically symmetric (or orthogonal invariance) Finsler metric \cite{HM1,Z} : \[ F=\vert y\vert \phi\left(\vert x\vert, \frac{ \langle x,y\rangle }{\vert y \vert}\right), \]
where $ x\in M=\mathbb{R}^{n+1}, y\in T_xM, $
or the warped metrics \cite{Zhao2018,Kozma2001,Liu2019,marcal2023} defined on $ I\times \mathbb{R}^n $ of the form 
\begin{align*}	F&=\vert \overline{y}\vert\phi\left(x^0,\frac{y^0}{\vert\overline{y}\vert}\right), &
	F&=\vert \overline{y}\vert \phi\left(\frac{y^0}{\vert\overline{y}\vert},\vert \overline{x}\vert\right).
\end{align*}
 A Finsler metric $F$ is called {\em cylindrically symmetric } (or weakly orthogonally invariant in an alternative terminology in \cite{Liu2024}) if $F$ satisfies \eqref{eq:1}.  In \cite{Liu2024}, the authors showed that cylindrically symmetric metrics are non-trivial in the sense that this type of metric is not of orthogonal invariance (see Proposition 2.2 in \cite{Liu2024}).

In \cite{Solorzano2022} the author showed that every cylindrically symmetric Finsler metric can be written as 
\[F(x,y)=\vert \overline{y}\vert{\phi\left(x^0,\vert \overline{x}\vert,\frac{\langle\overline{x},\overline{y}\rangle}{\vert\overline{y}\vert},\frac{y^0}{\vert \overline{y}\vert}\right)},\]
where $|\cdot|$ and $\langle \cdot,\cdot\rangle$ are, respectively, the 
standard Euclidean norm and inner product on $\mathbb{R}^n$. Furthermore, in \cite{Solorzano2023} the authors provide necessary and sufficient conditions for $F=\vert \overline{y}\vert \phi$ to be a Finsler metric (Theorem 1 in \cite{Solorzano2023}). 

In Section 2 we give some preliminaries and recall some recent results about cylindrically symmetric Finsler metrics. In section 3 we study their Douglas curvature. Specifically, we obtain the Douglas curvature (see Theorem \ref{Dcurv}) and the characterization of the vanishing Douglas curvature (see Theorem \ref{maintheo1}). In Section 4 we give some examples.

\section{Preliminaries}

In this section, we give some notations, definitions, and lemmas that will be used in the proof of our main results.
Let $M$ be a manifold, and let $TM=\cup_{x\in M}T_xM$ be the tangent
bundle of $M$, where $T_xM$ is the tangent space at $x\in M$. We
set $TM_o:=TM\setminus\{0\}$ where $\{0\}$ stands for
$\left\{(x,\,0)|\, x\in M,\, 0\in T_xM\right\}$. A {\em Finsler
metric} on $M$ is a function $F:TM\to [0,\,\infty)$ with the
following properties:
\begin{itemize}
    \item[(a)] $F$ is $C^{\infty}$ on $TM_o$;

\item[(b)] At each point $x\in M$, the restriction $F_x:=F|_{T_xM}$ is a
Minkowski norm on $T_xM$.
 
\end{itemize}
Let  $\mathbb{B}^n(\rho)\subset\mathbb{R}^n$ the $n$ dimensional open ball of radius $\rho$ and centered at the origin ($n\geq 2$). Set 
 $M=I\times \mathbb{B}^n(\rho)\subset \mathbb{R}\times\mathbb{R}^n,$ with coordinates on $ TM $
\begin{align}
	x&=(x^0, \overline{x}), \quad \overline{x}=(x^1,\ldots,x^n),\label{coordx}\\
	y&=(y^0, \overline{y}), \quad \overline{y}=(y^1,\ldots,y^n).\label{coordy}
\end{align} 
Throughout our work, the following convention for indices is adopted: 
\begin{align*}
	0\leq&A, B, \ldots \leq n;\\
	1\leq&i,j,\ldots \leq n.
\end{align*} 
Introducing the notation 
\begin{align}\label{rs} 
	 r&:=|\overline{x}|,  &s&:=\frac{\langle \overline{x},\,\overline{y}\rangle}{\vert\overline{y}\vert},&  z&:= \frac{y^0}{\vert\overline{y}\vert},
\end{align} 
where $|\cdot|$ and $\langle\cdot,\cdot\rangle$ are, respectively, the 
standard Euclidean norm and inner product on $\mathbb{R}^n$. 

In \cite{Solorzano2022}, the authors proved that, if the Finsler metric $F$ satisfies \eqref{eq:1}, then there exist a positive function $\phi:\mathbb{R}^4 \to\mathbb{R} $ such that,
\begin{align}\label{def:F}
F(x,y) = \vert \overline{y}\vert \phi(x^0,r,s,z).
\end{align}
On the other hand, defining $ \Omega $ and $ \Lambda $ as,
\begin{align}
	\Omega:=&\phi-s\phi_s-z\phi_z \label{DefOmega},\\
	 \Lambda:=& \Omega \phi_{zz}+(r^2-s^2)(\phi_{ss}\phi_{zz}-\phi^2_{sz}),\label{Def:Lambda}
\end{align}
where, the sub-index $s,z$ are the partial derivatives respect to $s$ and $z$ respectively, the Hessian matrix $ \left(g_{AB}\right)=\frac{1}{2} [F^2]_{y^Ay^B}=
\left(
\begin{array}{c|c}
	g_{00} & g_{0j} \\
	\hline
	g_{i0} & g_{ij}
\end{array}
\right),$ is given by

\begin{align*}
	g_{00}=&\phi^2_z+\phi\phi_{zz},\\
	g_{i0}=&g_{0i}=(\phi\Omega)_zu^i+(\phi_s\phi_z+\phi\phi_{sz})x^i,\\
	g_{ij}=&\phi\Omega\delta_{ij} + X_{ij},
\end{align*}
where $ X_{ij}=(u^i, x^i)\left(\begin{array}{c c}
	-(s(\phi\Omega)_s+z(\phi\Omega)_z) & (\phi\Omega)_s\\
	(\phi\Omega)_s& (\phi_s^2+\phi\phi_{ss}) 
\end{array}\right)\left(\begin{array}{c}
	u^j\\
	x^j
\end{array}\right),$ with $u^j=\dfrac{y^j}{|\overline{y}|}$.  
 
Note that, the determinant of $ g_{AB} $ is given by \begin{align*}
	\det(g_{AB})=\phi^{n+2}\Omega^{n-2}\Lambda.
\end{align*}
With this, we recall the next result about the necessary and sufficiency condition for the function $F=\vert \overline{y}\vert \phi(x^0,r,s,z)$ to be a Finsler metric \cite{Solorzano2023}.
\begin{proposition}\normalfont
		Let $F=\vert\overline{y}\vert{\phi(x^0,r,s,z)}$ be a Finsler metric defined on $ M $, where $ z=\frac{y^0}{\vert\overline{y}\vert}, $ $r=\vert\overline{x}\vert$, $ s=\frac{\langle\overline{x},\overline{y}\rangle}{\vert\overline{y}\vert} $ and  $TM $  with coordinates   \eqref{coordx}-\eqref{coordy}. Then $  F$ is a Finsler metric if, and only if, the positive function $ \phi $ satisfies  $ \Lambda>0 $ for $ n=2 $  with additional inequality, $ \Omega>0 $ for $ n\geq 3. $
\end{proposition}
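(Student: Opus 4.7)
The plan is to verify the definition of a Finsler metric piece by piece for $F=|\overline{y}|\phi(x^0,r,s,z)$. Smoothness on $TM_o$, strict positivity, and positive one-homogeneity in $y$ follow immediately from $\phi>0$ being smooth together with the fact that $|\overline{y}|$ is one-homogeneous in $y$ while $r$ is independent of $y$ and $s,z$ are zero-homogeneous in $y$. The genuine content of the proposition is therefore the equivalence between the positive definiteness of the Hessian $(g_{AB})=\frac{1}{2}[F^2]_{y^Ay^B}$ and the pointwise conditions on $\Lambda$ (and on $\Omega$ when $n\geq 3$).

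My strategy for the Hessian is to exploit the block form recorded in the preliminaries: $g_{ij}=\phi\Omega\,\delta_{ij}+X_{ij}$, where $X_{ij}$ has rank at most two, supported in the plane spanned by $u=\overline{y}/|\overline{y}|$ and $\overline{x}$. Choosing an orthonormal basis of $\mathbb{R}^n$ whose first two vectors span this plane (completing when $n\geq 3$) and adjoining $\partial_{y^0}$, the full $(n+1)\times(n+1)$ matrix $(g_{AB})$ becomes block-diagonal: a $3\times 3$ block coupling $\partial_{y^0}$ with $u$ and the component of $\overline{x}$ perpendicular to $u$, and the scalar block $\phi\Omega\,I_{n-2}$ on the orthogonal $(n-2)$-dimensional subspace. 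Positive definiteness then splits cleanly: for $n\geq 3$ the scalar block forces $\phi\Omega>0$, hence $\Omega>0$, whereas for $n=2$ this block is absent and no such inequality arises.

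What remains is the $3\times 3$ block, which carries all of the nontrivial algebra. Since $\det(g_{AB})=\phi^{n+2}\Omega^{n-2}\Lambda$ and the orthogonal block contributes $(\phi\Omega)^{n-2}$, the determinant of the $3\times 3$ block equals $\phi^{4}\Lambda$, so its positivity is equivalent to $\Lambda>0$. For the necessity direction this is enough. For sufficiency I would check, using the adapted basis, that the $1\times 1$ and $2\times 2$ leading principal minors of the $3\times 3$ block are also positive under $\phi>0$, $\Lambda>0$ (and $\Omega>0$ when $n\geq 3$); this is the main obstacle, since it requires expressing these minors in $\phi$ and its derivatives and extracting their signs from the hypotheses. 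If the direct computation proves unwieldy, the cleaner alternative I would follow is a continuity argument: connect $\phi$ to a model function for which $F$ is manifestly a Finsler metric (for example the one giving the Euclidean norm) through a one-parameter family of positive functions along which $\Lambda$ (and $\Omega$, when needed) remain positive; then $(g_{AB})$ stays non-degenerate with positive determinant on the deformation, and positive definiteness is preserved by continuity of the spectrum.
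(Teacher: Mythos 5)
This proposition is not proved in the paper at all: it is recalled verbatim from Theorem~1 of \cite{Solorzano2023}, so there is no in-paper argument to compare yours against. Judged on its own terms, your setup is right where it is easy and incomplete where the real work lies. The block-diagonalization in a basis adapted to $\mathrm{span}\{\overline{y},\overline{x}\}$, the identification of the $(n-2)$-dimensional block $\phi\Omega\, I_{n-2}$ (whence $\Omega>0$ is forced only when $n\geq 3$), and the computation $\det(3\times 3\text{ block})=\phi^{4}\Lambda$ from $\det(g_{AB})=\phi^{n+2}\Omega^{n-2}\Lambda$ are all correct, and they do give the necessity direction cleanly. (Minor caveat: you should say a word about the degenerate configurations $\overline{x}\parallel\overline{y}$ and $\overline{x}=0$, where the adapted plane collapses.)

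The gap is in sufficiency, and neither of your two fallbacks closes it as stated. For the minor route: the leading principal minors of the $3\times 3$ block, e.g.\ $g_{00}=\phi_z^2+\phi\phi_{zz}$, are \emph{not} pointwise consequences of $\Lambda>0$ and $\Omega>0$ evaluated at the same $(r,s,z)$; the hypothesis is an inequality on the whole admissible range of $(s,z)$ (with $s^2\leq r^2$) and must be exploited as such, so a purely local minor computation cannot succeed. For the continuity route: this is indeed the standard mechanism (and the one used in the cited source and in the spherically symmetric/$(\alpha,\beta)$ literature), but the entire difficulty is hidden in your clause ``along which $\Lambda$ remains positive.'' The quantity $\Omega=\phi-s\phi_s-z\phi_z$ is linear in $\phi$, so its positivity survives a linear interpolation $\phi_t=t\phi+(1-t)\phi_{\mathrm{model}}$; but $\Lambda$ contains the quadratic term $(r^2-s^2)(\phi_{ss}\phi_{zz}-\phi_{sz}^2)$, so $\Lambda_t$ is not a convex combination of the endpoint values and its positivity along the path is exactly what must be proved, not assumed. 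A complete proof has to specify the deformation concretely and verify $\Lambda_t>0$ (and $\Omega_t>0$) for all $t$, which is the substantive content of Theorem~1 in \cite{Solorzano2023}; as written, your argument assumes the conclusion of that verification.
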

The next proposition  gives us one the most important quantities in Finsler Geometry: The geodesic coefficients \begin{align*}
	 G^A=Py^A+Q^A, \end{align*}
where
\begin{align*}
	P:=&\frac{F_{x^C}y^C}{2F}, & Q^A:=\frac{F}{2}g^{AB}\left\{F_{x^Cy^B}y^C-F_{x^B}\right\},
\end{align*}
where $g^{AB}$ is the inverse  of the matrix $g_{AB}$ (see details in  \cite{Solorzano2023}).

\begin{proposition}\label{prop2}\normalfont Let
	$F=\vert\overline{y}\vert{\phi(x^0,r,s,z)}$ be a Finsler metric defined on $ M $, where $ z=\frac{y^0}{\vert\overline{y}\vert}, $ $r=\vert\overline{x}\vert$, $ s=\frac{\langle\overline{x},\overline{y}\rangle}{\vert\overline{y}\vert} $ and  $TM $  with coordinates   \eqref{coordx}-\eqref{coordy}.  Then the geodesic spray coefficients $ G^A $ are given by
	\begin{align}
        	G^0&=u^2\left\{z(W+sU)+L\right\},\label{eq:G^0}\\
		G^i&=u^2Wu_i + u^2Ux^i,\label{eq:G^i}
	\end{align}
where $ u=\vert\overline{y} \vert$, $u_i=\frac{y^i}{u}$, $ \Omega, \Lambda $ are given in \eqref{DefOmega}, \eqref{Def:Lambda} respectively, and
\begin{align}
	W&:=\frac{1}{\phi}\left\{\frac{\varphi}{2}-s\phi U - \phi_zL - (r^2-s^2)\phi_sU\right\},\nonumber\\
L&:=\frac{\Omega}{2 \Lambda}(\varphi_z-2\phi_{x^0})-(r^2-s^2)V,\label{def:A}\\
 U&:=\frac{1}{2\Lambda}\left\{\left(\varphi_s-\frac{2}{r}\phi_r\right)\phi_{zz}-\left(\varphi_z-2\phi_{x^0}\right)\phi_{sz}\right\},\label{def:U}\\
	V&:=\frac{1}{2\Lambda}\left\{\left(\varphi_s-\frac{2}{r}\phi_r\right)\phi_{sz}-\left(\varphi_z-2\phi_{x^0}\right)\phi_{ss}\right\},\nonumber\\
	\varphi &:=z\phi_{x^0}+\frac{s}{r}\phi_r+\phi_s.\nonumber
\end{align}
\end{proposition}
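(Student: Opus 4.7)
The plan is to compute $G^A = Py^A + Q^A$ directly from the definitions of $P$ and $Q^A$, exploiting the specific form $F = u\phi(x^0, r, s, z)$ with $u = |\overline{y}|$. The chain-rule identities $\partial r/\partial x^i = x^i/r$, $\partial s/\partial x^i = y^i/u = u^i$, and $\partial z/\partial x^A = 0$, together with $\langle \overline{x},\overline{y}\rangle = s u$, govern every subsequent derivative and should be collected at the outset.

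First I would dispose of the $P$-part. A short chain-rule calculation gives
\[
F_{x^C}y^C = u^2\left(z\phi_{x^0} + \frac{s}{r}\phi_r + \phi_s\right) = u^2\varphi,
\]
so $P = u\varphi/(2\phi)$, whence $Py^0 = u^2 z\,\varphi/(2\phi)$ and $Py^i = u^2 u^i\,\varphi/(2\phi)$. These are exactly the ``$\varphi/(2\phi)$'' contributions that will combine with the remaining $Q^A$-terms to form the coefficient $W$ of the stated formulas.

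The bulk of the work is the evaluation of $Q^A = \tfrac{F}{2}g^{AB}\bigl\{F_{x^Cy^B}y^C - F_{x^B}\bigr\}$. Differentiating the identity $F_{x^C}y^C = u^2\varphi$ with respect to $y^B$ and subtracting $F_{x^B}$ produces, componentwise, a linear combination of $x^j$, $u^j$ and $y^0$-terms in which the pairings $\varphi_s - (2/r)\phi_r$ and $\varphi_z - 2\phi_{x^0}$ arise naturally; these are exactly the combinations defining $U$ and $V$ in \eqref{def:U}. To multiply by $g^{AB}$ I would use the block structure recalled in the preliminaries: $g_{ij}$ equals $\phi\Omega\,\delta_{ij}$ plus a rank-two correction in the $\{u^i, x^i\}$-frame, and the inversion of this block introduces the denominator $\Lambda$ (consistent with $\det(g_{AB}) = \phi^{n+2}\Omega^{n-2}\Lambda$). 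Regrouping the resulting vector along the bases $x^i$, $u^i$ and the $y^0$-direction, and using the homogeneity identities $y^0 = u z$ and $\langle \overline{x},\overline{y}\rangle = s u$ to move $x^i$- and $u^i$-contributions into $G^0$, yields the claimed expressions $G^i = u^2 W u^i + u^2 U x^i$ and $G^0 = u^2\{z(W + sU) + L\}$.

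The main obstacle is the rank-two block inversion together with the algebraic bookkeeping that follows: many cross-terms appear, and the scalar abbreviations $W$, $L$, $U$, $V$ of \eqref{def:A}--\eqref{def:U} are precisely the combinations that force those cross-terms to collapse into the compact form above. Once those shorthands are adopted, the remaining verification reduces to direct substitution, and the proof amounts to checking that the projections onto $x^i$, $u^i$ and $y^0$ agree term-by-term with the stated $G^A$.
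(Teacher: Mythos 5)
The paper itself contains no proof of Proposition~\ref{prop2}: it is quoted from \cite{Solorzano2023} (``see details in \cite{Solorzano2023}''), so there is no in-text argument to compare against. Your strategy --- compute $P$ and $Q^A$ directly from their definitions using the chain rule for $r,s,z$ --- is the standard and essentially the only route, and the part you actually carry out is correct: $F_{x^C}y^C=u^2\varphi$ (using $y^0=uz$, $x^iy^i=su$, $u^iy^i=u$), hence $P=u\varphi/(2\phi)$, and these terms do account for the $\varphi/(2\phi)$ contribution inside $W$. Your identification of where the combinations $\varphi_s-\tfrac{2}{r}\phi_r$ and $\varphi_z-2\phi_{x^0}$ come from is also right: since $\partial_{y^B}(F_{x^C}y^C)=F_{x^Cy^B}y^C+F_{x^B}$, one gets
\begin{align*}
F_{x^Cy^0}y^C-F_{x^0}&=u\left(\varphi_z-2\phi_{x^0}\right),\\
F_{x^Cy^j}y^C-F_{x^j}&=u\left(\varphi_s-\tfrac{2}{r}\phi_r\right)x^j+u\left(2\varphi-s\varphi_s-z\varphi_z-2\phi_s\right)u^j.
\end{align*}

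The gap is that the decisive step is only described, never performed. All of the content of the proposition sits in solving $g_{AB}Q^B=\tfrac{F}{2}\bigl(F_{x^Cy^A}y^C-F_{x^A}\bigr)$: because the right-hand side lies in the span of $e_0$, $(0,\overline{u})$, $(0,\overline{x})$ and $g_{AB}$ preserves that three-dimensional subspace (this is the content of the block formulas for $g_{00}$, $g_{0i}$, $g_{ij}$ in the preliminaries), one should posit $Q^0=u^2c$, $Q^i=u^2(a\,u^i+b\,x^i)$ and solve the resulting $3\times 3$ linear system, whose determinant is (up to a power of $\phi$) the quantity $\Lambda$. Asserting that ``the cross-terms collapse'' into $W$, $L$, $U$, $V$ is precisely the claim to be proved, and it is exactly in this elimination that a sign or a factor of $r^2-s^2$ could go wrong; for instance, the stated $L$ is not simply the $e_0$-component of the solution but the combination that survives after the $zW$ and $zsU$ pieces are split off from $Q^0$. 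As written, your text is a correct and well-organized plan, but not yet a proof; to complete it you must exhibit the $3\times 3$ system explicitly, invert it, and check term-by-term that the projections onto $x^i$, $u^i$ and the $y^0$-direction reproduce \eqref{eq:G^0}--\eqref{eq:G^i} with $W$, $L$, $U$, $V$ exactly as in \eqref{def:A}--\eqref{def:U}.
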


\section{Douglas curvature}

A Finsler metric on a $n-$diensional manifold $N$ is called a {\em Douglas metric} if its geodesic coefficients $G^i=G^i(x,\,y)$ are given in the following form
$$
G^i=\frac 12\Gamma^i_{jk}(x)y^jy^k+P(x,\,y)y^i,
$$
where $\Gamma^i_{jk}(x)$ are functions on $N$, in local coordinates,  and $P(x,\,y)$
is a local positively $y$-homogeneous function of degree one. 

In \cite{D}, Douglas introduced the local functions 
 $D_j{}^i{}_{kl}$ on ${T}N^n$ defined by
\[
D_j{}^i{}_{kl}:=\frac{\partial^3}{\partial y^j\partial y^k\partial
y^l}\left(G^i-\frac 1{n+1}\sum_m \frac{\partial G^m}{\partial
y^m}y^i\right),\]  in local coordinates $x^1,\ldots,x^n$ and $y=\sum_i y^i \partial/\partial x^i$. 
These functions are called  {\em Douglas curvature} \cite{D} and a Finsler metric 
 $F$ with $D_j{}^i{}_{kl}=0$ is called {\em Douglas metric}.

Before to obtain the Douglas curvature for a cylindrically symmetric Finsler metric $F=\vert \overline{y}\vert\phi(x^0,r,s,z)$ we claim the next.
\begin{lemma}\label{lem1}\normalfont Under the assumptions of the Proposition \ref{prop2}, we have the following equalities:
    \begin{align*}
	G^0-\frac{y^0}{n+2}\frac{\partial G^A}{\partial y^A}=&u^2R ,\\
	G^i-\frac{y^i}{n+2}\frac{\partial G^A}{\partial y^A}=&u^2Ux^i-u^2Tu_i.
\end{align*}
where
\begin{align}
	R=& \left\{L-\frac{z}{n+2}\left[L_z-(n-1)sU+(r^2-s^2)U_s\right]\right\},\label{def:R}\\
 	T=&\frac{1}{n+2}\left\{3sU+L_z+(r^2-s^2)U_s\right\}.\label{def:T}
\end{align}
\end{lemma}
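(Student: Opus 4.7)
The plan is to compute the quantity $\partial G^A/\partial y^A$ (with implicit summation over $A=0,\dots,n$) directly from \eqref{eq:G^0}--\eqref{eq:G^i}, and then substitute it into the two expressions on the left-hand side. Since $W$, $U$, $L$ depend on the fiber coordinates only through $s=\langle\overline{x},\overline{y}\rangle/u$ and $z=y^0/u$, every partial derivative with respect to $y^A$ passes through $s$ and $z$ via the chain rule, keeping the computation symbolic in $W_s, W_z, U_s, U_z, L_z$.

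First I would compile the elementary formulas $\partial u/\partial y^0=0$, $\partial u/\partial y^i=u_i$, $\partial z/\partial y^0=1/u$, $\partial z/\partial y^i=-zu_i/u$, $\partial s/\partial y^0=0$, $\partial s/\partial y^i=(x^i-su_i)/u$, $\partial u_i/\partial y^j=(\delta_{ij}-u_iu_j)/u$, together with the standard contractions $\sum_i u_i^2=1$, $\sum_i u_i x^i=s$, $\sum_i(x^i)^2=r^2$, and $\sum_i \partial u_i/\partial y^i=(n-1)/u$. These collapse every chain-rule contraction into simple expressions in $r, s, z$.

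Next I would compute $\partial G^0/\partial y^0$ and $\sum_i \partial G^i/\partial y^i$ separately. From $G^0=u^2(zW+szU+L)$ one expects $\partial G^0/\partial y^0=u[W+zW_z+sU+szU_z+L_z]$. Splitting $G^i=u^2 W u_i+u^2 U x^i$, the $W$-piece should contribute $u[(n+1)W-zW_z]$ (the $-zW_z$ coming from $\sum_i u_i\,\partial W/\partial y^i=-zW_z/u$, which uses $\sum_i u_i(x^i-su_i)=0$), while the $U$-piece should contribute $2usU+u[(r^2-s^2)U_s-szU_z]$ (via $\sum_i x^i\,\partial U/\partial y^i=[U_s(r^2-s^2)-U_zzs]/u$). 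The crucial observation is that the $\pm zW_z$ and $\pm szU_z$ cross-terms appear with opposite signs and cancel, leaving the clean divergence
\[
\frac{\partial G^A}{\partial y^A}=u\bigl[(n+2)W+3sU+L_z+(r^2-s^2)U_s\bigr].
\]

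Substituting $y^0=zu$ in the first identity, the $u^2 zW$ summand of $G^0$ cancels exactly against the $(n+2)W$ contribution; what remains is $u^2[szU+L]-\tfrac{u^2z}{n+2}[3sU+L_z+(r^2-s^2)U_s]$, and the $szU$ pieces recombine using $1-\tfrac{3}{n+2}=\tfrac{n-1}{n+2}$, reproducing the $-(n-1)sU$ inside the bracket of \eqref{def:R}. Likewise, $y^i=uu_i$ makes the $u^2 W u_i$ term in $G^i$ cancel, leaving $u^2Ux^i-\tfrac{u^2 u_i}{n+2}[3sU+L_z+(r^2-s^2)U_s]=u^2Ux^i-u^2Tu_i$ with $T$ as in \eqref{def:T}. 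The only real obstacle is the bookkeeping in the second step; once the cancellation of the $W_z$ and $U_z$ cross-terms is recognised, the whole calculation collapses algebraically.
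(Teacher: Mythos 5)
Your proposal is correct and follows essentially the same route as the paper: compute $\partial G^0/\partial y^0$ and $\sum_i\partial G^i/\partial y^i$ via the chain rule through $s$ and $z$, observe the cancellation of the $zW_z$ and $szU_z$ cross-terms to get $\partial G^A/\partial y^A=u[(n+2)W+3sU+L_z+(r^2-s^2)U_s]$, and then substitute. The only cosmetic difference is that the paper extracts the $-zW_z$ term from Euler's theorem applied to the degree-one homogeneous function $uW$, whereas you obtain it directly from the contraction $\sum_i u_i(x^i-su_i)=0$; the two are equivalent.
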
 
\begin{proof}
    From \eqref{rs}, we have the partial derivatives of  $ u=\vert \overline{y}\vert,  $ $ s $ and $ z, $  respect to $ y^i $  
\begin{align}
	u_j&=\frac{y^j}{u},\label{eq:u_j}\\
		u_{jk}&=\frac{1}{u}\left(\delta_{jk}-u_ju_k\right),\nonumber\\
	s_j&=\frac{1}{u}\left(x^j-su_j\right),\label{eq:s_j}\\
	z_l&=-\frac{z}{u}u_l. \label{eq:z_l}
	\end{align}
From \eqref{eq:u_j}, \eqref{eq:s_j} and \eqref{eq:z_l}, 
\begin{align}
    u_iu_i&=1, & u_ix^i&=s,\nonumber\\
    s_ix^i&=\frac{r^2-s^2}{u}, & s_iu_i&=0, \label{eq:us_ix^i}\\
    uz_ix^i&=-sz, & z_iu_i&=-\frac{z}{u},\label{eq:uz_ix^i}\\
    s_is_i&=\frac{r^2-s^2}{u^2}.\nonumber
\end{align}
Additionally, from \eqref{eq:G^0}, using \eqref{eq:us_ix^i} and \eqref{eq:uz_ix^i}, we have,

\begin{align}\label{eq:partialG^0}
	\frac{\partial G^0}{\partial y^0}&=u\left\{(W+sU) + z(W_z+sU_z) + L_z\right\}.
\end{align}
Note that $ uW $ (in \eqref{eq:G^i}) is positive homogeneous of degree 1 on $y=(y^0,\overline{y})$.  From Euler's theorem for homogeneous functions,
	\begin{align*}
		\frac{\partial uW}{\partial y^i}y^i=u(W-zW_z),
	\end{align*} 
then,
\begin{align}\label{eq:partialG^i}
	\sum \frac{\partial G^i}{\partial y^i}&=u\left\{(n+1)W-zW_z+2sU-szU_z+(r^2-s^2)U_s\right\},
\end{align}
and consequently, from \eqref{eq:partialG^0} and \eqref{eq:partialG^i}, we have
\begin{align}\label{eq:partialGA}
	\frac{\partial G^A}{\partial y^A}=u\left\{(n+2)W+3sU+A_z + (r^2-s^2)U_s\right\}.
\end{align}
Using \eqref{eq:partialGA}, \eqref{eq:G^0} and \eqref{eq:G^i}, we obtain the result.
\end{proof}
To obtain the Douglas curvature of the cylindrically symmetric Finsler metric \eqref{def:F}, for any differentiable function $ \Theta=\Theta(s,z),$ we adopt the notation $\Psi(\Theta)=-s\Theta_s-z\Theta_z$, and we observe that, for any $m\in\mathbb{Z}^\ast$ we have
\begin{align}
\Psi(\Psi(\Theta))&=-\Psi(\Theta)-s\Psi(\Theta_s)-z\Psi(\Theta_z),\nonumber\\
\frac{\Psi\left(z^m\Theta\right)}{z^m}&=\Psi(\Theta)-m\Theta,\nonumber\\
\frac{\Psi\left(z^m\Theta\right)}{z^m}&=\frac{\Psi\left(z^{m-1}\Theta\right)}{z^{m-1}}-\Theta,\nonumber\\
\Psi\left(z^2\Psi\left(\dfrac{\Theta}{z^2}\right)\right)&=-sz\Psi\left(\dfrac{\Theta_s}{z}\right)-z^2\Psi\left(\dfrac{\Theta_z}{z}\right),\label{psieq1}\\
\dfrac{1}{z}\Psi\left(z^2\Psi\left(\dfrac{\Theta}{z}\right)\right)&=-s\Psi(\Theta_s)-z\Psi(\Theta_z)-z\Psi\left(\dfrac{\Theta}{z}\right),\nonumber\\
\Psi\left(\Theta_z\right)&=\Psi_z(\Theta) + \Theta_z,\label{psieq4}\\
z\Psi_z(\Theta)&=\Psi\left(z\Theta_z\right),\label{eq:psiztheta}\\
\Psi_s(\Theta)&=\Psi(\Theta_s)-\Theta_s,\nonumber\\
 z\Psi_s\left(\frac{\Theta}{z}\right) &= \Psi (\Theta_s),\label{psieq2}\\
\left(z\Psi\left(\frac{\Theta}{z}\right)\right)_z &= \Psi (\Theta_z),\label{psieq3}\\
\Psi\left(z^2\Psi\left(\frac{\Theta_s}{z}\right)\right)&=z\Psi_s\left(z^2\Psi\left(\frac{\Theta}{z^2}\right)\right),\nonumber\\
\Psi_s\left(z^2\Psi\left(\frac{\Theta}{z^2}\right)\right)&=\Psi\left(z\Psi\left(\dfrac{\Theta_s}{z}\right)\right)-z\Psi\left(\dfrac{\Theta_s}{z}\right).\nonumber
\end{align}
Whit this,  
\begin{align}
	\frac{\partial\Theta}{\partial y^0}&= \frac{\Theta_z}{u},\label{partial0}
	\\
	u\frac{\partial\Theta}{\partial y^l}&=\Theta_sx^l +\Psi(\Theta)u_l,
	\label{thetal}\\
 u\frac{\partial }{\partial y^k } \left(\Theta u_ l\right)&=\Theta\delta_{kl} + \Theta_sx^ku_l+ \frac{1}{z}\Psi\left(z\Theta\right)u_ku_l,\label{eq:Thetaul}\\
     u\frac{\partial }{\partial y^j}\left(\Theta u_ku_l\right)&=\Theta\left(\delta_{jk}u_l \right)_{\overrightarrow{kl}} + \Theta_sx^ju_ku_l + \frac{1}{z^2}\Psi\left(z^2\Theta\right)u_ju_ku_l,\label{eq:Thetaukul}\\
 u\frac{\partial }{\partial y^j}\left(\Theta u_ku_lu_i\right)&=\Theta (\delta_{jk}u_lu_i )_{\overrightarrow{kli}} + \Theta_sx^ju_ku_lu_i + \frac{1}{z^3}\Psi\left(z^3\Theta\right)u_ju_ku_lu_i, \label{eq:tetajkli}
\end{align}
where $(.)_{jkl}$ denotes the cyclic permutation (ex.: $(\delta_{jk}u_lu_i )_{\overrightarrow{kli}}=\delta_{jk}u_lu_i+\delta_{jl}u_iu_k + \delta_{ji}u_ku_l$).

\begin{theorem}\label{Dcurv}\normalfont Let $F=\vert\overline{y}\vert{\phi(x^0,r,s,z)}$ be a  Finsler metric defined on $ M $, where $ z=\frac{y^0}{\vert\overline{y}\vert}, $ $r=\vert\overline{x}\vert$, $ s=\frac{\langle\overline{x},\overline{y}\rangle}{\vert\overline{y}\vert} $ and  $TM $  with coordinates   \eqref{coordx}, \eqref{coordy}.  Then the
Douglas curvature of $F$ is given by
\begin{align*}
	D^0_{000}&=\frac{1}{u}R_{zzz},\\
	D^0_{00l}&=\frac{1}{u}\left\{R_{szz}x^l+\Psi(R_{zz})u_l\right\},\\
	D^{0}_{0kl}&=\frac{1}{u}\left\{R_{ssz}x^kx^l + \Psi\left(R_{sz}\right)(x^lu_k)_{\overrightarrow{lk}} + z\Psi\left(\frac{R_z}{z}\right)\delta_{kl} + \frac{1}{z}\Psi\left(z^2\Psi\left(\frac{R_z}{z}\right)\right)u_ku_l \right\},\\
D^0_{jkl}&=\frac{1}{u}\left\{\frac{R_{sss}}{3}x^jx^kx^l+\Psi(R_{ss})x^jx^ku_l +z\Psi\left(\frac{R_s}{z}\right)x^j\delta_{kl}+\Psi\left(z^2\Psi\left(\frac{R}{z^2}\right)\right)u_j\delta_{kl}  \right.\\
    &\qquad\left.+\frac{1}{z}\Psi\left(z^2\Psi\left(\frac{R_s}{z}\right)\right)x^ju_ku_l+ \frac{1}{3z^2}\Psi\left(z^2\Psi\left(z^2\Psi\left(\frac{R}{z^2}\right)\right)\right)u_ju_ku_l\right\}_{\overrightarrow{jkl}},\\
D^i_{000}&=\frac{1}{u}\left\{U_{zzz}x^i - T_{zzz}u_i\right\},\\
D^i_{00l}&=\frac{1}{u}\left\{U_{szz}x^lx^i + \Psi\left(U_{zz}\right)x^iu_l - T_{zz}\delta_{il} - T_{szz}x^lu_ i - \Psi_z\left(T_{z}\right)u_lu_i\right\},\\
D^i_{0kl}&= \frac{1}{u}\left\{U_{ssz}x^kx^lx^i+z\Psi\left(\frac{U_z}{z}\right)\delta_{kl}x^i+ \frac{1}{z}\Psi\left(z^2\Psi\left(\frac{U_z}{z}\right)\right)u_ku_lx^i\right.\\
&\left.\quad\qquad -T_{ssz}x^kx^lu_i - \frac{1}{z^2}\Psi\left(z^2\Psi\left(T_z\right)\right)u_k u_lu_i \right\}\\
    &\quad +\frac{1}{u}\left\{\Psi(U_{sz})u_kx^lx^i  - T_{sz}x^k\delta_{li} -\frac{1}{z}\Psi\left(zT_{sz}\right)x^lu_ku_i \right\}_{\overrightarrow{kl}}+\frac{1}{u}\Psi(T_z)(\delta_{il}u_k)_{\overrightarrow{ikl}},\\
	D^i_{jkl}&= \frac{1}{u}\left\{U_{sss}x^jx^kx^l+\frac{1}{z^2}\Psi\left(z^2\Psi\left(z^2\Psi\left(\frac{U}{z^2}\right)\right)\right)u_ju_ku_l\right.\\
 &\quad+\left. \left[\Psi(U_{ss})u_jx^kx^l+z\Psi\left(\frac{U_s}{z}\right)\delta_{jk}x^l+\Psi_s\left(z^2\Psi\left(\frac{U}{z^2}\right)\right)u_ju_kx^l  
 +\Psi\left(z^2\Psi\left(\frac{U}{z^2}\right)\right) \delta_{jk}u_l\right]_{\overrightarrow{jkl}}\right\}x^i
 \\
&\quad -\frac{1}{u}\left\{\left[T_{ss}\delta_{ij}x^kx^l+\Psi_s(T_s)u_iu_jx^kx^l + \frac{1}{z^2}\Psi\left(z^2\Psi\left(T_s\right)\right)x^ju_ku_lu_i\right]_{\overrightarrow{jkl}}\right.\\
&\quad +z\Psi\left(\frac{T}{z}\right)(\delta_{ji}\delta_{kl})_{\overrightarrow{ikl}}+\Psi\left(T_s\right)\left(x^j(u_i\delta_{kl})_{\overrightarrow{ikl}}+x^k(u_j\delta_{il})_{\overrightarrow{ijl}}+x^l(u_i\delta_{jk})_{\overrightarrow{ijk}}\right)\\
&\quad \left.+\frac{1}{z}\Psi\left(z^2\Psi\left(\frac{T}{z}\right)\right)(\delta_{ji}u_ku_l + \delta_{ik}u_lu_j)_{\overrightarrow{ikl}}+T_{sss}x^jx^kx^lu_i +\frac{1}{z^3}\Psi\left(z^2\Psi\left(z^2\Psi\left(\frac{T}{z}\right)\right)\right)u_ju_ku_lu_i\right\},
\end{align*}
 where, $\Psi(\Theta)=-s\Theta_s-z\Theta_z $, $ u=\vert\overline{y}\vert,$  $ u_i=\frac{\partial u}{\partial y^i}=u^i,$ and $(\cdot)_{\overrightarrow{jkl}}$ denotes cyclic permutation.

\end{theorem}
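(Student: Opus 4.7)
The starting point is Lemma~\ref{lem1}, which expresses the ``projective part'' of the geodesic spray in the compact form
\[
G^0 - \frac{y^0}{n+2}\frac{\partial G^A}{\partial y^A} = u^2 R,\qquad
G^i - \frac{y^i}{n+2}\frac{\partial G^A}{\partial y^A} = u^2 U x^i - u^2 T u_i,
\]
so that the Douglas tensor is simply $D^A_{jkl}=\partial^3_{y^jy^ky^l}$ of these scalars. The plan is to carry out a direct three-fold differentiation, using the formulas \eqref{partial0}, \eqref{thetal}, \eqref{eq:Thetaul}, \eqref{eq:Thetaukul}, \eqref{eq:tetajkli} together with the algebraic identities satisfied by the operator $\Psi$.

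Since partial derivatives commute, I would always differentiate first with respect to $y^0$ and only afterwards in the spatial directions. The $y^0$-derivatives are essentially trivial because $u=|\overline{y}|$ does not depend on $y^0$ and \eqref{partial0} gives $\partial_{y^0}\Theta=\Theta_z/u$; each such step sends $u^a\Theta$ to $u^{a-1}\Theta_z$. This disposes of $D^0_{000}$ and $D^i_{000}$ at once, and in general reduces the component $D^A_{0\cdots 0 k_1\cdots k_m}$ to $m$ spatial derivatives applied to an appropriate $z$-derivative of $R$, $U$ or $T$. For these spatial derivatives I would apply \eqref{thetal}--\eqref{eq:tetajkli} in turn, which describe precisely the action of $\partial_{y^l}$ on $\Theta$, $\Theta u_l$, $\Theta u_k u_l$ and $\Theta u_k u_l u_i$, producing at each step an $x^j$-term, a $u_j$-term weighted by $\Psi(\cdot)$, and from the second step on a collection of Kronecker-$\delta$ terms.

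For the components with an upper spatial index $i$, there is an extra subtlety coming from the two summands of $u^2 U x^i - u^2 T u_i$. The first is $x^i$ times a scalar, and because $x^i$ is $y$-independent its three-fold derivative mirrors exactly the $D^0$ computation with $R$ replaced by $U$; this accounts for the entire $x^i$-block in the $D^i_{jkl}$ formula. The second summand can be rewritten as $u^2 T u_i = (uT)\,y^i$, and the Leibniz rule on $y^i$ yields one ``top'' term in which all three derivatives fall on $uT$ (producing a factor $u_i$) together with three lower-order pieces involving a Kronecker-$\delta$, symmetrized over $\{i,j,k,l\}$. These are precisely what the cyclic-permutation blocks of the form $(\delta_{ji}\delta_{kl})_{\overrightarrow{ikl}}$ and $(u_j\delta_{il})_{\overrightarrow{ijl}}$ in the statement encode.

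The main obstacle is not conceptual but combinatorial: after three spatial differentiations one repeatedly encounters combinations such as $\Theta+\Psi(\Theta)$, $2\Theta_s+\Psi_s(\Psi(\Theta))$ and their variants. Collapsing these into the canonical shapes $z\Psi(\Theta/z)$, $\Psi(z^2\Psi(\Theta/z^2))$, $z\Psi_s(\Theta/z)$, $\Psi_z(\Theta)$ displayed in the theorem is exactly the purpose of the preparatory identities \eqref{psieq1}, \eqref{psieq4}, \eqref{eq:psiztheta}, \eqref{psieq2} and \eqref{psieq3}. The resulting bookkeeping is lengthy but purely mechanical; the case $D^i_{jkl}$ is the most intricate because all four of $i,j,k,l$ play a symmetric role and the largest number of permutation blocks $(\cdot)_{\overrightarrow{jkl}}$ must be assembled.
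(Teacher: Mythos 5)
Your proposal is correct and follows essentially the same route as the paper: both reduce the computation to a direct three-fold differentiation of $u^2R$ and $u^2Ux^i-u^2Tu_i$ from Lemma~\ref{lem1}, taking the $y^0$-derivatives first via \eqref{partial0} and the spatial ones via \eqref{thetal}--\eqref{eq:tetajkli}, then collapsing the resulting combinations with the $\Psi$-identities. The only cosmetic difference is that the paper also rewrites $u^a\Theta$ as $(y^0)^a\,\Theta/z^a$ to pull the $y^0$-factor outside the spatial derivatives, a bookkeeping device within the same computation.
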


\begin{proof}
 By Lemma \ref{lem1} and from \eqref{partial0}, \eqref{thetal}, \eqref{eq:Thetaul}, \eqref{eq:Thetaukul}, we have,
\begin{align*}
    D_0{}^0{}_{00}=&\frac{\partial^3}{\partial y^0\partial y^0\partial y^0}(u^2R)=\frac{\partial^2}{\partial y^0\partial y^0}(u^2\frac{R_z}{u})=\frac{\partial}{\partial y^0}(R_{zz}) = \frac{R_{zzz}}{u}\\
    D_0{}^0{}_{0l}=&\frac{\partial^3}{\partial y^0\partial y^0\partial y^l}(u^2R)=\frac{\partial}{\partial y^l}(R_{zz})=\frac{1}{u}\left[R_{szz}x^l + \Psi(R_{zz})u_l\right]
\end{align*}
and using the identity $\Psi(R_{sz}) = z\Psi_s\left(\frac{R_z}{z}\right)$, where the sub index $s$ represents the partial derivative in $s$, we obtain
\begin{align*}
    D_0{}^0{}_{kl}&=\frac{\partial^2}{\partial y^k\partial y^l}\left(uR_z\right)=y^0\frac{\partial^2}{\partial y^k\partial y^l}\left(\frac{R_z}{z}\right)
    = \frac{\partial }{\partial y^k}\left({R_{sz}}x^l +{z} \Psi\left(\frac{R_z}{z}\right)u_l\right)\\
    &=\frac{1}{u}\left\{R_{ssz}x^kx^l + \Psi\left(R_{sz}\right)(x^lu_k)_{\overrightarrow{lk}} + z\Psi\left(\frac{R_z}{z}\right)\delta_{kl} + \frac{1}{z}\Psi\left(z^2\Psi\left(\frac{R_z}{z}\right)\right)u_ku_l \right\}\\
    D_j{}^0{}_{kl}&=(y^0)^2\frac{\partial^3}{\partial y^j\partial y^k\partial y^l}\left(\frac{R}{z^2}\right)=y^0\frac{\partial^2}{\partial y^j\partial y^k}\left(\frac{R_s}{z}x^l + z\Psi\left(\frac{R}{z^2}\right)u_l\right)\\
    &=\frac{\partial }{\partial y ^j}\left[R_{ss}x^kx^l + z\Psi\left(\frac{R_s}{z}\right)(x^lu_k)_{\overrightarrow{kl}} + z^2\Psi\left(\frac{R}{z^2}\right)\delta_{kl} + \Psi\left(z^2\Psi\left(\frac{R}{z^2}\right)\right)u_ku_l\right].
\end{align*}
From \eqref{eq:G^i},  \eqref{thetal}, \eqref{eq:Thetaul} and \eqref{eq:psiztheta}, $D_0{}^i{}_{00}$ and $D_0{}^i{}_{0l}$ are directly obtained. Using the properties of $\Psi $ we have,
\begin{align*}
    D_0{}^i{}_{kl}&=y^0\frac{\partial^2}{\partial y^k\partial y^l}\left(\frac{U_z}{z}x^i - \frac{T}{z}u_i\right)\\
    &=\frac{\partial }{\partial y^k} \left(U_{sz}x^lx^i+z\Psi\left(\frac{U_z}{z}\right)u_lx^i - T_z\delta_{li} - T_{sz}x^lu_i - \Psi(T_z)u_lu_i\right).
\end{align*}
Analogous to the previous cases, using \eqref{eq:tetajkli}, we have
\begin{align*}
D_j{}^i{}_{kl}&=\frac{\partial^2}{\partial y^j\partial y^k}\left(2Uuu_lx^i+uU_sx^lx^i+u\Psi(U)u_lx^i-2uTu_lu_i-uT\delta_{li}-uT_sx^lu_i-\dfrac{\Psi(zT)}{z}uu_lu_i\right)\\
    &=\frac{\partial}{\partial y^j}\left\{\Psi\left(z^2\Psi\left(\dfrac{U}{z^2}\right) \right)u_ku_lx^i+z^2\Psi\left(\dfrac{U}{z^2}\right)\delta_{kl}x^i+z\Psi\left(\dfrac{U_s}{z}\right)(x^ku_l)_{ \overrightarrow{kl}}x^i\right.\\
    &\quad \left.-\dfrac{1}{z}\Psi\left(z^2\Psi\left(\dfrac{T}{z}\right)\right) u_ku_lu_i-z\Psi\left(\dfrac{T}{z}\right)(\delta_{kl}u_i)_{ \overrightarrow{kli}}-\Psi(T_s)(x^ku_l)_{ \overrightarrow{kl}}u_i+U_{ss}x^lx^kx^i\right.\\
    &\quad\left.-T_{ss}u_ix^kx^l-T_s(x^k\delta_{li})_{ \overrightarrow{kl}}\dfrac{}{}\right\rbrace.\end{align*}

\end{proof}

\begin{theorem}\label{maintheo1}\normalfont
    Let $F=\vert u\vert \phi(x^0,r,s,z),$ be a Finsler metric defined on $I\times \mathbb{B}^n(\rho)$, $n\geq 3$, where $z=\frac{y^0}{u},$ $r=\vert \overline{x}\vert$ and $s=\frac{\langle \overline{x},\overline{y}\rangle}{u}$, and $TM$ defined with coordinates \eqref{coordx}, \eqref{coordy}. Then $F$ has vanishing Douglas curvature if, and only if, $\phi$ satisfies
    \begin{align}
        (a)\quad z\Psi\left(\frac{U_s}{z}\right)&=0 &(b)\quad z\Psi\left(\frac{U_z}{z}\right)&=0, &(c)\quad U_{zzz}&=0,\label{eq:pdeU}\\
        (a)\quad z\Psi\left(\frac{R_s}{z}\right)&=0 & (b)\quad z\Psi\left(\frac{R_z}{z}\right)&=0, &(c)\quad R_{zzz}&=0,\label{eq:pdeR}\\
        (a)\quad z\Psi\left(\frac{T}{z}\right)&=0, &(b)\quad T_{zz}&=0,\label{eq:pdeT}
        \end{align}
    where $U, R$ and $T$ are given in \eqref{def:U}, \eqref{def:R} and \eqref{def:T}, respectively.
\end{theorem}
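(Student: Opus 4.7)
The plan is to apply Theorem~\ref{Dcurv} directly, viewing each of its eight expressions $D^A{}_{BCD}$ as a polynomial in the tensor building blocks $\{x^i,\,u_i,\,\delta_{ij}\}$ whose scalar coefficients depend on $(x^0,r,s,z)$ through $R$, $U$, $T$ and their $\Psi$--derivatives. For $n\geq 3$ these tensor monomials are linearly independent as tensor fields, so the tensorial identity $D^A{}_{BCD}=0$ is equivalent to the simultaneous vanishing of every scalar coefficient. The proof then reduces to two tasks: collect the scalar equations obtained in this way, and show that they are all equivalent to the eight PDEs \eqref{eq:pdeU}--\eqref{eq:pdeT}.

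For the necessity direction I would read off one equation per distinguished tensor structure. The purely longitudinal components $D^0{}_{000}$ and $D^i{}_{000}$ immediately yield $R_{zzz}=0$ and $U_{zzz}=0$, i.e.\ conditions (c) in \eqref{eq:pdeR} and \eqref{eq:pdeU}. The coefficient of $\delta_{kl}$ in $D^0{}_{0kl}$ gives $z\Psi(R_z/z)=0$; the coefficient of $\delta_{il}$ in $D^i{}_{00l}$ gives $T_{zz}=0$; and the $\delta_{kl}x^i$--coefficient in $D^i{}_{0kl}$ gives $z\Psi(U_z/z)=0$. Finally, the rank-three tensors $D^0{}_{jkl}$ and $D^i{}_{jkl}$ yield, from their $x^j\delta_{kl}$--, $\delta_{jk}x^l x^i$-- and $\delta_{ij}\delta_{kl}$--coefficients respectively, the remaining conditions $z\Psi(R_s/z)=0$, $z\Psi(U_s/z)=0$ and $z\Psi(T/z)=0$.

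For the sufficiency direction I would verify that all \emph{other} scalar coefficients in Theorem~\ref{Dcurv} vanish automatically once the eight PDEs are imposed. This is where the $\Psi$--identities \eqref{psieq1}--\eqref{psieq3} do most of the work. For example, $\Psi(z^2\Psi(U/z^2))$ appearing in $D^i{}_{jkl}$ reduces via \eqref{psieq1} to $-sz\Psi(U_s/z)-z^2\Psi(U_z/z)$, which vanishes by \eqref{eq:pdeU}(a)(b); differentiating \eqref{eq:pdeR}(b) in $z$ and combining with \eqref{eq:pdeR}(c) forces $R_{zzs}$ and $\Psi(R_{zz})$ to vanish, making the remaining coefficients in $D^0{}_{00l}$ and $D^0{}_{0kl}$ automatic; and $z\Psi(T/z)=0$ together with $T_{zz}=0$ imply that $T$ is of the form $c_1(x^0,r)s+c_2(x^0,r)z$, from which $T_{sss}$, $\Psi(T_s)$ and $\Psi(z^2\Psi(T/z))$ vanish by direct substitution. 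The same pattern closes out the $U$- and $R$-coefficients by repeated use of \eqref{psieq2}--\eqref{psieq3}.

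The principal obstacle I anticipate is the combinatorial bookkeeping: $D^i{}_{jkl}$ alone decomposes into roughly a dozen symmetrized tensor monomials in $x$, $u$ and $\delta$, and one must check both that no essentially independent monomial is overlooked and that each of its coefficients really does reduce to one of the eight listed PDEs via the $\Psi$--calculus. The hypothesis $n\geq 3$ enters precisely here, guaranteeing that the relevant symmetric tensor monomials in $\{x^i,u_i,\delta_{ij}\}$ remain linearly independent as tensor fields on $TM_o$, so that the tensorial vanishing truly is equivalent to coefficient-wise vanishing.
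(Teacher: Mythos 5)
Your proposal is correct and follows essentially the same route as the paper: both directions rest on the component formulas of Theorem~\ref{Dcurv}, with sufficiency closed out by the $\Psi$--identities \eqref{psieq1}--\eqref{psieq3} exactly as you describe. The only (cosmetic) difference is in necessity, where the paper implements your linear-independence argument concretely by rotating to $\tilde{x}=(r,0,\ldots,0)$, $\tilde{y}$ in the span of the first two coordinates, and evaluating components involving the transverse index $3$ (whence the hypothesis $n\geq 3$), then solving a small linear system such as $rz\Psi(R_s/z)+\tfrac{s}{r}\Psi(z^2\Psi(R/z^2))=0$ rather than reading off each monomial coefficient separately.
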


\begin{proof} Suppose $F$ has vanishing Douglas curvature. Consider the orthonormal matrix $O\in O(n)$ (See the proof of Proposition 1.3.1 in \cite{GuoMo2018Book} or the proof of Lemma 1 in \cite{Solorzano2022}) such that 
   \begin{align*}
       \tilde{x}&=O\overline{x}=\left(\vert \overline{x}\vert, 0,\ldots,0\right)\\
       \Tilde{y}&=O\overline{y}=\left(\frac{\langle \overline{x}, \overline{y}\rangle}{\vert \overline{x}\vert}, \frac{\sqrt{\vert \overline{x}\vert^2\vert \overline{y}\vert^2 - \langle \overline{x},\overline{y}\rangle^2}}{\vert \overline{x}\vert},0,\ldots,0\right).
   \end{align*} 
   For the invariance of $r,s$ and $z$ under the action $O,$ from $D_0{}^0{}_{00}=0$, we obtain $R_{zzz}=0$. From $D_0{}^0{}_{33}=0$, we get \begin{equation}\label{Deq1}
z\Psi\left(\dfrac{R_z}{z}\right)=0.
   \end{equation}
   Using property \eqref{psieq1} and \eqref{Deq1}, we have
\begin{equation}\label{Deq2}
       \Psi\left(z^2\Psi\left(\dfrac{R}{z^2}\right)\right)=-sz\Psi\left(\dfrac{R_s}{z}\right).
   \end{equation}
   From $D_1{}^0{}_{33}=0$, we obtain
   \begin{equation}\label{Deq3}    rz\Psi\left(\dfrac{R_s}{z}\right)+\dfrac{s}{r}\Psi\left(z^2\Psi\left(\dfrac{R}{z^2}\right)\right)=0.
   \end{equation}
   Substituting \eqref{Deq2} into \eqref{Deq3}, we get
   \[\left(\dfrac{r^2-s^2}{r}\right)\left(z\Psi\left(\dfrac{R_s}{z}\right)\right)=0.\]Hence, \[z\Psi\left(\dfrac{R_s}{z}\right)=0.
\]Thus, \eqref{eq:pdeR} is satisfied.

From $D_0{}^1{}_{00}=0$, we get $U_{zzz}=0$. From $D_0{}^1{}_{33}=0$, we have 
\[z\Psi\left(\dfrac{U_z}{z}\right)=0.\]From $D_3{}^1{}_{31}=0$, we obtain \[rz\Psi\left(\dfrac{R_s}{z}\right)+\dfrac{s}{r}\Psi\left(z^2\Psi\left(\dfrac{R}{z^2}\right)\right)=0.\]Similarly, as in the case of $R$, we conclude
   \[\left(\dfrac{r^2-s^2}{r}\right)\left(z\Psi\left(\dfrac{U_s}{z}\right)\right)=0.\]Therefore, \[z\Psi\left(\dfrac{U_s}{z}\right)=0,
\]and thus \eqref{eq:pdeU} is satisfied.

From $D_0{}^3{}_{03}=0$ and $D_3{}^3{}_{33}=0$, 
 we have $T_{zz}=0$ and $z\Psi\left(\dfrac{T}{z}\right)=0$.

Conversely, assume that $\phi$ satisfies \eqref{eq:pdeU}, \eqref{eq:pdeR} and \eqref{eq:pdeT}. From \eqref{eq:pdeR} $(a)$, we get $D_0{}^0{}_{00}=0$. Using property \eqref{psieq2} and \eqref{eq:pdeR} $(a),\;(b)$, we obtain
\begin{equation}\label{Deq4} \Psi(R_{ss})=z\Psi_s\left(\dfrac{R_s}{z}\right)=0,
\end{equation}
and 
\begin{equation}\label{Deq5} \Psi(R_{sz})=z\Psi_s\left(\dfrac{R_z}{z}\right)=0.
\end{equation}
By property \eqref{psieq3} and \eqref{eq:pdeR} $(b)$, we have
\begin{equation}\label{Deq6}
\Psi(R_{zz})=\left(z\Psi\left(\dfrac{R_z}{z}\right)\right)_z=0.
\end{equation}
From \eqref{Deq6} and \eqref{eq:pdeR} $(c)$, we get
\begin{equation}\label{Deq7}
    R_{zzs}=0.
\end{equation}
Therefore, by \eqref{Deq6} and \eqref{Deq7}, we obtain $D_0{}^0{}_{0l}=0$. From \eqref{Deq5} and \eqref{Deq7}, we also have
\begin{equation}\label{Deq8}
 R_{zss}=0.   
\end{equation}
Consequently, by \eqref{Deq8}, \eqref{Deq4} and \eqref{eq:pdeR} $(b)$, we have $D_0{}^0{}_{kl}=0$. From \eqref{Deq4} and \eqref{Deq8}, we obtain
\begin{equation}\label{Deq9}
    R_{sss}=0.
\end{equation}
Also, by property \eqref{psieq1} and \eqref{eq:pdeR} $(a),\;(b)$, we have
\begin{equation}\label{Deq10}
\Psi\left(z^2\Psi\left(\dfrac{R}{z^2}\right)\right)=-sz\Psi\left(\dfrac{R_s}{z}\right)-z^2\Psi\left(\dfrac{R_z}{z}\right)=0. 
\end{equation}
Thus, by \eqref{Deq9}, \eqref{Deq4}, \eqref{eq:pdeR} $(a)$ and \eqref{Deq10}, we conclude that $D_j{}^0{}_{kl}=0$. From \eqref{eq:pdeR} $(c)$ and \eqref{eq:pdeT} $(b)$, we have $D_0{}^i{}_{00}=0$. Now, by \eqref{eq:pdeU}, analogously as in the case of $R$ we obtain
\begin{align}
\Psi(U_{ss})=\Psi(U_{sz})=\Psi(U_{zz})=0\label{Deq11},\\
U_{zss}=U_{zss}=U_{sss}=0,\quad \Psi\left(z^2\Psi\left(\dfrac{U}{z^2}\right)\right)=0.\label{Deq12}
\end{align}
On the other hand, by property \eqref{psieq4} and \eqref{eq:pdeT} $(b)$, we have
\begin{equation}\label{Deq13}
\Psi_z(T_z)=\Psi(T_{zz})-T_{zz}=0.    
\end{equation}
Therefore, by \eqref{Deq11}, \eqref{Deq12}, \eqref{eq:pdeT} $(b)$ and \eqref{Deq13}, we obtain $D_0{}^i{}_{0l}=0$. By property \eqref{psieq3} and \eqref{eq:pdeT} $(a)$, we get
\begin{equation}\label{Deq14}
\Psi(T_z)=\left(z\Psi\left(\dfrac{T}{z}\right)\right)_z=0.
\end{equation}From \eqref{Deq14} and \eqref{eq:pdeT} $(b)$, we obtain 
\begin{equation}\label{Deq15}
    T_{sz}=0.
\end{equation}
Consequently, by \eqref{Deq11}, \eqref{Deq12}, \eqref{eq:pdeU} $(b)$, \eqref{Deq14} and \eqref{Deq15}, we get $D_0{}^i{}_{kl}=0$. By property \eqref{psieq2} and \eqref{eq:pdeT} $(a)$, we obtain
\begin{equation}\label{Deq16} \Psi(T_s)=z\Psi_s\left(\dfrac{T}{z}\right)=0.
\end{equation} From \eqref{Deq16} and \eqref{Deq15}, we have
\begin{equation}\label{Deq17}
    T_{ss}=0.
\end{equation}
Therefore, by \eqref{Deq11}, \eqref{Deq12}, \eqref{eq:pdeU}, \eqref{eq:pdeT} $(a)$, \eqref{Deq16} and \eqref{Deq17}, we obtain $D_j{}^i{}_{kl}=0$.
\end{proof}

\begin{corollary}\label{cor:PDE1}\normalfont
 Let $F= u \phi(x^0,z,r,s),$ be a Finsler metric defined on $I\times \mathbb{B}^n(\rho)$, $n\geq 3$, where $r=\vert \overline{x}\vert$, $s=\frac{\langle \overline{x},\overline{y}\rangle}{u}$ and $z=\frac{y^0}{u},$  and $TM$ defined with coordinates \eqref{coordx}, \eqref{coordy}. Suppose that $F=F(x,y)$ has vanishing Douglas curvature. Then, there exist some differentiable functions $f_i=f_i(x^0,r), g_i=g_i(x^0,r) $ and $h_i=h_i(x^0,r),$ such that $\phi$ satisfies,
 \begin{align}\label{eq:reduced}
    z\psi_{x^0}+\frac{s}{r}\psi_r + \left[1-2(r^2-s^2)U\right]\psi_s - 2L\psi_z=0,
\end{align}
where $\psi=\sqrt{r^2-s^2}\Omega,$ and,
\begin{align*}
U&=f_1\frac{s^2}{2}+f_2 sz + f_3\frac{z^2}{2} + f_4, \\
L&= g_1\frac{s^2}{2} +g_2sz +g_3\frac{z^2}{2}+g_4 +z(h_1s+h_2z)-sz(f_1\frac{s^2}{2}+f_2 sz + f_3\frac{z^2}{2} + f_4).
\end{align*}
\end{corollary}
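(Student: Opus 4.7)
The plan is to first turn the partial-differential system of Theorem~\ref{maintheo1} into explicit algebraic shapes for $U$, $R$, and $T$, next to assemble $L$ from an identity tying these three together, and finally to obtain the transport-type equation on $\psi$ from three structural identities that hold for every cylindrically symmetric Finsler metric.

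To handle \eqref{eq:pdeU} I expand $z\Psi(U_z/z)=-sU_{sz}-zU_{zz}+U_z$ and $z\Psi(U_s/z)=-sU_{ss}-zU_{sz}+U_s$; combined with $U_{zzz}=0$, these force $U$ to be a quadratic polynomial in $z$, and matching powers of $z$ followed by integration in $s$ of the resulting first-order ODEs yields $U=f_1 s^2/2+f_2 sz+f_3 z^2/2+f_4$ with $f_i=f_i(x^0,r)$. The same three-step argument applied to \eqref{eq:pdeR} gives $R=\tilde g_1 s^2/2+\tilde g_2 sz+\tilde g_3 z^2/2+\tilde g_4$, while for \eqref{eq:pdeT} the equation $T_{zz}=0$ makes $T$ linear in $z$ and $z\Psi(T/z)=-sT_s-zT_z+T=0$ then forces $T=h_1 s+h_2 z$. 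Eliminating $L_z+(r^2-s^2)U_s$ between \eqref{def:R} and \eqref{def:T} gives the clean identity $R=L-z(T-sU)$, i.e.\ $L=R+zT-szU$; substituting the explicit forms of $R$, $T$, and $U$ (and renaming $\tilde g_i$ to $g_i$) produces exactly the decomposition of $L$ claimed in the statement.

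For \eqref{eq:reduced} I introduce the vector field $\mathcal{L}:=z\partial_{x^0}+\tfrac{s}{r}\partial_r+[1-2(r^2-s^2)U]\partial_s-2L\partial_z$ and establish three transport identities:
\[
\mathcal{L}(\phi)=2\phi(W+sU),\qquad \mathcal{L}(\phi_z)=\phi_{x^0},\qquad \mathcal{L}(\phi_s)=\tfrac{\phi_r}{r}+2U\Omega.
\]
The first is only the formula for $W$ in Proposition~\ref{prop2} rearranged. The other two follow by solving the $2\times 2$ linear system formed by \eqref{def:U} and the analogous definition of $V$ for the pair $(\varphi_s-2\phi_r/r,\ \varphi_z-2\phi_{x^0})$ and then plugging back into \eqref{def:A}; the key simplification is that the determinant produced in this inversion telescopes against $\Lambda=\Omega\phi_{zz}+(r^2-s^2)(\phi_{ss}\phi_{zz}-\phi_{sz}^2)$. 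Applying the product rules $\mathcal{L}(sF)=s\mathcal{L}(F)+[1-2(r^2-s^2)U]F$ and $\mathcal{L}(zF)=z\mathcal{L}(F)-2LF$ to $\Omega=\phi-s\phi_s-z\phi_z$ and substituting the three identities above produces the cancellation $\mathcal{L}(\Omega)=-2sU\Omega$. A direct computation yields $\mathcal{L}(\sqrt{r^2-s^2})=2sU\sqrt{r^2-s^2}$, and a final Leibniz step closes the argument: $\mathcal{L}(\psi)=\mathcal{L}(\sqrt{r^2-s^2})\,\Omega+\sqrt{r^2-s^2}\,\mathcal{L}(\Omega)=0$, which is exactly \eqref{eq:reduced}.

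The main obstacle is deriving the two transport identities $\mathcal{L}(\phi_z)=\phi_{x^0}$ and $\mathcal{L}(\phi_s)=\phi_r/r+2U\Omega$: this requires inverting the linear system in $(\phi_{zz},\phi_{sz},\phi_{ss})$ that defines $U$ and $V$ and spotting the telescoping of $\Lambda$ inside the formula for $L$. Once these are in hand, the remainder is algebraic bookkeeping that I do not expect to cause any difficulty.
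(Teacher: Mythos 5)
Your proposal is correct and follows essentially the same route as the paper: the polynomial forms of $U,R,T$ come from the same analysis of the PDE system, the identity $L=R+zT-szU$ is the same elimination between \eqref{def:R} and \eqref{def:T}, and your two transport identities $\mathcal{L}(\phi_z)=\phi_{x^0}$ and $\mathcal{L}(\phi_s)=\tfrac{1}{r}\phi_r+2U\Omega$ are exactly the paper's inverted system \eqref{eq:p2AU} and \eqref{eq:p1AU}, with your Leibniz computation of $\mathcal{L}(\Omega)$ being the paper's evaluation of $sp_1+zp_2=-z\Omega_{x^0}-\tfrac{s}{r}\Omega_r-\Omega_s$ in disguise. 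The only cosmetic differences are that you package the final step as transport along a vector field (and supply the power-matching argument for the polynomial forms, which the paper omits), while the $W$-identity $\mathcal{L}(\phi)=2\phi(W+sU)$ is not actually needed since only the definitional expansion of $\mathcal{L}(\phi)$ enters the cancellation.
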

\begin{proof}
    From \eqref{eq:pdeU}, \eqref{eq:pdeR} and \eqref{eq:pdeT} we have that there are differentiable functions $f_i=f_i(x^0,r), g_i=g_i(x^0,r)$, $h_i=h_i(x^0,r)$ such that
    \begin{align*}
        U&=f_1\frac{s^2}{2}+f_2 sz + f_3\frac{z^2}{2} + f_4,\\
        R&=g_1\frac{s^2}{2}+g_2 sz + g_3\frac{z^2}{2} + g_4,\\
        T&=h_1s + h_2z.
    \end{align*}
From  \eqref{def:R} and \eqref{def:T} we have that $R+zT=A+szU$, and then,
\begin{align*}
L&=R+zT-szU\\
 &= g_1\frac{s^2}{2} +g_2sz +g_3\frac{z^2}{2}+g_4 +z(h_1s+h_2z)-sz(f_1\frac{s^2}{2}+f_2 sz + f_3\frac{z^2}{2} + f_4).
\end{align*}
From definition of $U$ and $A$ in \eqref{def:U} and \eqref{def:A}, we have
\begin{align}
    \phi_{zz}p_1-\phi_{sz}p_2&=2\Lambda U, \label{eq:UA01}\\
    -(r^2-s^2)\phi_{sz}p_1+(\Omega +(r^2-s^2)\phi_{ss})p_2&=2\Lambda L, \label{eq:UA02}
\end{align}
where
\begin{align}
   p_1&:=\left(\varphi_s-\frac{2}{r}\phi_r\right)=z\phi_{x^0s} - \frac{1}{r}\phi_r + \frac{s}{r}\phi_{rs}+\phi_{ss},\label{def:p1}\\
   p_2&:=\left(\varphi_z-2\phi_{x^0}\right)=z\phi_{x^0z} - \phi_{x^0} + \frac{s}{r}\phi_{rz} + \phi_{sz},\label{def:p2}
\end{align}
and $\Omega, \Lambda$ are given in \eqref{DefOmega} and \eqref{Def:Lambda} respectively. Due to the fact $\Lambda\neq 0, $ the system \eqref{eq:UA01}- \eqref{eq:UA02} is equivalent to
\begin{align}
    p_1&=2\left[\left(\Omega+(r^2-s^2)\phi_{ss}\right)U +\phi_{sz}L\right],\label{eq:p1AU}\\
    p_2&=2\left[\phi_{zz}L+(r^2-s^2)\phi_{sz}U\right].\label{eq:p2AU}
\end{align}
From the definition of $p_1$ and $p_2$ in \eqref{def:p1} and \eqref{def:p2}, we have, $sp_1+zp_2=-z\Omega_{x^0} - \frac{s}{r}\Omega_r-\Omega_s.$ And using \eqref{eq:UA01}-\eqref{eq:UA02}, we obtain,

\begin{align*}
    2sU\Omega + z\Omega_{x^0}+\frac{s}{r}\Omega_r + \left[1-2(r^2-s^2)U\right]\Omega_s - 2L\Omega_z=0
\end{align*}
which is equivalent to \eqref{eq:reduced}, using the substitution $\psi=\sqrt{r^2-s^2}\Omega.$
\end{proof}

\begin{remark}\normalfont
    From, \eqref{eq:p1AU}, \eqref{eq:p2AU} and due to the fact a cylindrically symmetric Finsler metric $F=\vert \overline{y}\vert \phi(x^0,r,s,z)$ is projectively flat (See Theorem 1.1 in \cite{Liu2024}) if, and only if, $p_1=p_2=0,$ then $F$ is protectively flat if, and only if, $L=U=0.$ 
\end{remark}

\section{Douglas metric examples}
Using Theorem \ref{maintheo1} and Corollary \ref{cor:PDE1} we obtain the next cylindrically symmetric Douglas metrics,
\begin{example}\normalfont
Let $\phi(x^0,r,s,z)$ be a function defined by
\begin{align}
    \phi(x^0,r,s,z)=\sqrt{1+r^2-s^2+e^{x^0}z^2} + sh(r),
\end{align}
where $h(r)$ is any function such that $\phi$ is positive. For instance, consider $h(r)=\dfrac{k}{1+r^2}$ where $|k|<2$. With this, 
\begin{align*}
    U&=-\frac{r^2-s^2+1}{1+r^2},\\
    R&=\frac{1}{4}\frac{(nr^2z-4ns+nz+4s)z}{(n+2)(1+r^2)},\\
    T&=\frac{1}{2}\frac{r^2z-6s+z}{(n+2)(1+r^2)}.
\end{align*} Then, by Theorem \ref{maintheo1}, the following Finsler metric on $\mathbb{R}\times \mathbb{B}^n(\rho)$
\begin{align*}
    F(x,y)=\sqrt{(1+\vert \overline{x}\vert^2)\vert \overline{y}\vert^2 -\langle \overline{x},\overline{y}\rangle^2 +e^{x^0}(y^0)^2 } +\frac{1}{1+\vert \overline{x}\vert^2}\langle \overline{x},\overline{y}\rangle,
\end{align*}
 is a cylindrically symmetric Douglas metric. 
\end{example}

\begin{example}\normalfont
Let $\phi(x^0,r,s,z)$ be a function defined by
\begin{align}
    \phi(x^0,r,s,z)=\sqrt{1+r^2+s^2+e^{x^0}z^2} + sh(r),
\end{align}
where $h(r)$ is any function such that $\phi$ is positive. For instance, consider $h(r)=\dfrac{k}{1+r^2}$ where $|k|<2$. With this, 
\begin{align*}
    U&=-\frac{-s^2}{1+3r^2 + 2r^4},\\
    R&=\frac{1}{4}\frac{(2nr^4z-8nr^2s+3nr^2z-4ns+nz-4s+4s)z}{(n+2)(1+3r^2+2r^4)},\\
    T&=\frac{1}{2}\frac{2r^4z-8r^2s+3r^2z-2s+z}{(n+2)(1+3r^2 + 2r^4)}.
\end{align*} Then, by Theorem \ref{maintheo1}, the following Finsler metric on $\mathbb{R}\times \mathbb{B}^n(\rho)$
\begin{align*}
    F(x,y)=\sqrt{(1+\vert \overline{x}\vert^2)\vert \overline{y}\vert^2 +\langle \overline{x},\overline{y}\rangle^2 +e^{x^0}(y^0)^2 } +h(\vert \overline{x}\vert)\langle \overline{x},\overline{y}\rangle,
\end{align*}
where $h(\vert \overline{x}\vert)<1$, is a cylindrically symmetric Douglas metric. 
\end{example}

\begin{example}\normalfont
Let $\phi(x^0,r,s,z)$ be a function defined by
\begin{align}
    \phi(x^0,r,s,z)=\frac{\sqrt{h(x^0)^2g(r)^2z^2+1}}{g(r)} + h(x^0)z,
\end{align}
where $h(x^0)>0$ is any function such that $\phi$ is positive.  With this, 
\begin{align*}
    U&=\frac{1}{2r}\frac{g'(r)}{g(r)},\\
    R&=\frac{1}{2(n+2)}\frac{n(g(r)h'(x^0)rz + 2h(x^0)g'(r)s)z}{rg(r)h(x^0)},\\
    T&=\frac{g(r)h'(x^0)rz + 2h(x^0)g'(r)s}{(n+2)rg(r)h(x^0)}.
\end{align*} Then, by Theorem \ref{maintheo1}, the following Finsler metric on $\mathbb{R}\times \mathbb{B}^n(\rho)$
\begin{align*}
    F(x,y)=\frac{\sqrt{\vert\overline{y}\vert^2 + h(x^0)^2 g(\vert\overline{x}\vert)^2(y^0)^2}}{g(\vert\overline{x}\vert)} +h(x^0)y^0,
\end{align*}
is a cylindrically symmetric Douglas metric. 
\end{example}

\begin{example}\cite{Solorzano2022}
    Considering $U=\frac{1}{2}\frac{g'(r)}{rg(r)}$ and $L=\frac{1}{2}\frac{szg'(r)}{rg(r)}.$ The function $\psi=G\left(\frac{r^2-s^2}{g(r)^2}, zg(r)\right)$ solves the equation \eqref{eq:reduced}. Then, if $ G=\frac{\sqrt{r^2-s^2}}{g(r)\sqrt{g(r)^2z^2+1}}$, the PDE \[\Omega=-s^2\left[\frac{\phi}{s}\right]_s - sz\left[\frac{\phi}{s}\right]_z=\frac{1}{g(r)\sqrt{g(r)^2 z^2+1}}\]
    give us
    \[\phi=\frac{\sqrt{g(r)^2z^2+1}}{g(r)} + h(x^0,r,\frac{z}{s})s.\]
    If $h(x,r,\frac{z}{s}) = h(x^0)\frac{z}{s}$, we obtain the next cylindrically symmetric Douglas metric, 
    \[F(x,y)=h(x^0)y^0 + \frac{\sqrt{g(\vert\overline{x}\vert)^2(y^0)^2 + 1}}{g(\vert\overline{x}\vert)}, \]
    where $\vert h(x^0)\vert<1.$
\end{example}

\begin{example}\label{ex:d1}
    Similarly to the previous example, considering $U=\frac{1}{2}\frac{g'(r)}{rg(r)}$, $L=\frac{1}{2}\frac{szg'(r)}{rg(r)}$ and $G=\frac{\sqrt{r^2-s^2}}{g(r)}\left(1+\frac{1}{(g(r)^2z^2+1)^{3/2}}\right)$, we get
    \[\phi =h(x^0,r,\frac{z}{s})s + \frac{1}{g(r)}\left(1+\frac{2g(r)^2z^2 + 1}{\sqrt{g(r)^2z^2 + 1}}\right). \]
    If $h(x^0,r,\frac{z}{s})s=h(x^0)z,$ we obtain the next cylindrically symmetric Douglas metric,
    \[F(x,y)=h(x^0)y^0 + \frac{1}{g(\vert \overline{x}\vert)}\left( \vert \overline{y}\vert + \sqrt{g(\vert \overline{x}\vert)^2(y^0)^2 + \vert \overline{y}\vert^2}+\frac{g(\vert \overline{x}\vert)^2(y^0)^2}{\sqrt{g(\vert \overline{x}\vert)^2(y^0)^2 + \vert \overline{y}\vert^2}}\right),\]
where     $\vert h(x^0)\vert<1$ and $g(r)>0.$
\end{example}

\begin{example}
    Let $\vert h(x^0)\vert<1, $ $g(r)>0$ and $f(x^0)>0$ differentiable functions. Motivated by the Example \ref{ex:d1}, the next cylindrically symmetric Finsler metric
    \[F(x,y)=h(x^0)y^0 + \frac{1}{g(\vert \overline{x}\vert)}\left( \vert \overline{y}\vert +\frac{2g(\vert \overline{x}\vert)^2(y^0)^2+f(x^0)\vert \overline{y}\vert^2}{\sqrt{g(\vert \overline{x}\vert)^2(y^0)^2 + f(x^0)\vert \overline{y}\vert^2}}\right),\]
     has vanishing Douglas curvature with,
    \begin{align*}
        U&=\frac{1}{2}\frac{g'(r)}{rg(r)}\\
        R&=-\frac{1}{12}\frac{4ng(r)^2f'(x^0)rz^2 -12nf(x^0)g(r)g'(r)sz+(n+2)f(x^0)f'(x^0)r}{(n+2)rg(r)^2f(x^0)}\\
        T&=-\frac{2}{3}\frac{g(r)f'(x^0)rz -3f(x^0)g'(r)s}{(n+2)rg(r)f(x^0)}.
    \end{align*}
\end{example}


\begin{thebibliography}{s1}

\bibitem{Bacso1997} S. B\'acs\'o{} and M. Matsumoto, On Finsler spaces of Douglas type---a generalization of the notion of Berwald space, Publ. Math. Debrecen {\bf 51} (1997), no.~3-4, 385--406.

\bibitem{Zhao2018}  B. Chen, Z. Shen and L.~L. Zhao, Constructions of Einstein Finsler metrics by warped product, Internat. J. Math. {\bf 29} (2018), no.~11, 1850081, 15 pp.


	
\bibitem{D}  J. Douglas, The general geometry of paths, Ann. of Math. (2) {\bf 29} (1927/28), no.~1-4, 143--168.




\bibitem{Kozma2001} L. Kozma, I.~R. Peter and C.~G. Varga, Warped product of Finsler manifolds, Ann. Univ. Sci. Budapest. E\"otv\"os Sect. Math. {\bf 44} (2001), 157--170. 


\bibitem{Liu2019} H. Liu and X.~H. Mo, Finsler warped product metrics of Douglas type, Canad. Math. Bull. {\bf 62} (2019), no.~1, 119--130.

\bibitem{marcal2023}  P. Mar\c cal and Z. Shen, Ricci-flat Finsler metrics by warped product, Proc. Amer. Math. Soc. {\bf 151} (2023), no.~5, 2169--2183. 


\bibitem{Solorzano2022} N.~M. Sol\'orzano~Ch\'avez, On Douglas warped product metrics, Results Math. {\bf 77} (2022), no.~5, Paper No. 189, 18 pp.

\bibitem{HM1} L.~B. Huang and X.~H. Mo, On spherically symmetric Finsler metrics of scalar curvature, J. Geom. Phys. {\bf 62} (2012), no.~11, 2279--2287.
\bibitem{Solorzano2023} N.~M. Sol\'orzano~Ch\'avez and V.~A. Mart\'inez~Le\'on, On cylindrical symmetric projectively flat Finsler metrics, J. Geom. Phys. {\bf 186} (2023), Paper No. 104777, 10 pp.


\bibitem{GuoMo2018Book}
E.~L. Guo and X.~H. Mo, Spherically Symmetric Finsler Metrics. In: The Geometry of Spherically Symmetric Finsler Manifolds. Springer Briefs in Mathematics. Springer, Singapore, 2018.


\bibitem{Liu2024} H. Liu, X.~H. Mo and L. Zhu, A class of projectively flat Finsler metrics, Results Math. {\bf 79} (2024), no.~6, Paper No. 226, 22 pp.

\bibitem{Z} L. Zhou, Spherically symmetric Finsler metrics in $R^n$, Publ. Math. Debrecen {\bf 80} (2012), no.~1-2, 67--77.
 
\end{thebibliography}
\end{document}